\newtheorem{thm}{Theorem}[section]
\newtheorem{prop}[thm]{Proposition}
\newtheorem{cor}[thm]{Corollary}
\newtheorem{lem}[thm]{Lemma}
\numberwithin{equation}{section}
\renewcommand{\thefootnote}{}
\begin{document}

\begin{center}
{\large\bf New $q$-supercongruences arising \\from a summation of
basic hypergeometric series

 \footnote{ The corresponding author$^*$. Email addresses: weichuanan78@163.com (C. Wei), lichun@hainnu.\\edu.cn (C. Li).}}
\end{center}

\renewcommand{\thefootnote}{$\dagger$}

\vskip 2mm \centerline{$^{1}$Chuanan Wei, $^{2}$Chun Li $^{*}$}
\begin{center}
{$^{1}$School of Biomedical Information and Engineering,\\
  Hainan Medical University, Haikou 571199, China\\
  $^{2}$Key Laboratory of Data Science and Intelligence Education
 of\\ Ministry of Education, Hainan Normal University, Haikou 571158, China}
\end{center}

%%date: January 4, 2011
%\vskip 5mm
%\noindent {\it Suggested Running title}: Two Identities of Gould

\vskip 0.7cm \noindent{\bf Abstract.} With the help of  a summation
of basic hypergeometric series,  the creative microscoping method
recently introduced by Guo and Zudilin, and the Chinese remainder
theorem for coprime polynomials, we find some new
$q$-supercongruences. Especially, we give a
$q$-analogue of a formula due to Liu [J. Math. Anal. Appl. 497
(2021), Art.~124915].

\vskip 3mm \noindent {\it Keywords}: $q$-supercongruence; basic
hypergeometric series; creative microscoping method; Chinese
remainder theorem for coprime polynomials

 \vskip 0.2cm \noindent{\it AMS
Subject Classifications:} 33D15; 11A07; 11B65

\section{Introduction}
For any complex variable $x$, define the shifted-factorial to be
\[(x)_{0}=1\quad \text{and}\quad (x)_{n}
=x(x+1)\cdots(x+n-1)\quad \text{when}\quad n\in\mathbb{Z}^{+}.\] Let
$p$ be an odd prime and let $\mathbb{Z}_p$ denote the ring of all
$p$-adic integers. Define Morita's $p$-adic Gamma function (cf.
\cite[Chapter 7]{Robert}) by
 \[\Gamma_{p}(0)=1\quad \text{and}\quad \Gamma_{p}(n)
=(-1)^n\prod_{\substack{1\leqslant k< n\\
p\nmid k}}k,\quad \text{when}\quad n\in\mathbb{Z}^{+}.\] Noting
$\mathbb{N}$ is a dense subset of $\mathbb{Z}_p$ related to the
$p$-adic norm $|\cdot|_p$, for each $x\in\mathbb{Z}_p$, the
definition of $p$-adic Gamma function can be extended as
 \[\Gamma_{p}(x)
=\lim_{\substack{n\in\mathbb{N}\\
|x-n|_p\to0}}\Gamma_{p}(n).\]
 Two properties of the $p$-adic Gamma function in common use can be stated as follows:

  \begin{equation*}
\frac{\Gamma_p(x+1)}{\Gamma_p(x)}=
\begin{cases}
 -x, &\text{if $p\nmid x$,}\\[10pt]
-1, &\text{if $p\,|\,x$,}
 \end{cases}
 \end{equation*}
  \begin{equation*}
\Gamma_p(x)\Gamma_p(1-x)=(-1)^{\langle-x\rangle_p-1},
 \end{equation*}
where $\langle x\rangle_p$ indicates the least nonnegative residue of $x$ modulo $p$, i.e., $\langle x\rangle_p\equiv x\pmod p$
and $\langle x\rangle_p\in\{0,1,\ldots,p-1\}$.
  In 2016, Long and Ramakrishna
\cite[Proposition 25]{LR} showed that, for any prime $p$,
\begin{equation}\label{eq:long}
\sum_{k=0}^{p-1}\frac{(1/3)_k^3}{k!^3}\equiv
\begin{cases} \displaystyle \Gamma_p(1/3)^6  \pmod{p^3}, &\text{if $p\equiv 1\pmod 6$,}\\[10pt]
\displaystyle -\frac{p^2}{3}\Gamma_p(1/3)^6\pmod{p^3}, &\text{if
$p\equiv 5\pmod 6$.}
\end{cases}
\end{equation}
Similarly, Liu \cite[Theorem 1.1]{LJ} proved that, for any prime
$p$,
\begin{equation}\label{eq:liu}
\sum_{k=0}^{p-1}\frac{(-1/3)_k^3}{k!^3}\equiv
\begin{cases} \displaystyle -18p^2\Gamma_p(2/3)^6  \pmod{p^3}, &\text{if $p\equiv 1\pmod 6$,}\\[10pt]
\displaystyle 54\Gamma_p(2/3)^6\pmod{p^3}, &\text{if $p\equiv 5\pmod
6$.}
\end{cases}
\end{equation}

 For any complex numbers $x$ and $q$, define the $q$-shifted factorial
 to be
 \begin{equation*}
(x;q)_{0}=1\quad\text{and}\quad
(x;q)_n=(1-x)(1-xq)\cdots(1-xq^{n-1})\quad \text{when}\quad
n\in\mathbb{Z}^{+}.
 \end{equation*}
For simplicity, we also adopt the compact notation
\begin{equation*}
(x_1,x_2,\dots,x_m;q)_{n}=(x_1;q)_{n}(x_2;q)_{n}\cdots(x_m;q)_{n}.
 \end{equation*}
Following Gasper and Rahman \cite{Gasper}, define the basic
hypergeometric series $_{r+1}\phi_{r}$ by
$$
_{r+1}\phi_{r}\left[\begin{array}{c}
a_1,a_2,\ldots,a_{r+1}\\
b_1,b_2,\ldots,b_{r}
\end{array};q,\, z
\right] =\sum_{k=0}^{\infty}\frac{(a_1,a_2,\ldots, a_{r+1};q)_k}
{(q,b_1,b_2,\ldots,b_{r};q)_k}z^k.
$$

Recently, Guo \cite{Guo-adb} established three $q$-supercongruences
 via the creative microscoping method (introduced by Guo and Zudilin \cite{GuoZu}), and the Chinese
remainder theorem for polynomials. Similarly, Wei, Liu, and
Wang\cite[Theorems 1.1 and 1.2]{Wei-b} provided a $q$-analogue of
\eqref{eq:long}. For more $q$-analogues of supercongruences, we
refer the reader to
\cite{Guo-diff,Guo-ijnt,GS20c,GS20d,GuoZu-b,GS,Li,LW,LP,Tauraso,WY,WY-a,WY-b,WY-c,Wei-a,
Zu19}.

Let $[n]=(1-q^n)/(1-q)$ be the $q$-integer and $\Phi_n(q)$ the $n$-th cyclotomic polynomial in $q$:
\begin{equation*}
\Phi_n(q)=\prod_{\substack{1\leqslant k\leqslant n\\
\gcd(k,n)=1}}(q-\zeta^k),
\end{equation*}
where $\zeta$ is an $n$-th primitive root of unity. Motivated by the
work just mentioned, we shall establish the following two theorems.

\begin{thm}\label{thm-a}
Let $n$ be a positive integer with $n\equiv 1\pmod 6$. Then, modulo
$\Phi_n(q)^3$,
\begin{align*}
\sum_{k=0}^{(2n+1)/3}\frac{(q^{-1};q^3)_k^3}{(q^3;q^3)_k^3}q^{9k}
&\equiv
q^{(2-2n)/3}(1+q)\frac{(q;q^3)_{(2n+1)/3}^2}{(q^3;q^3)_{(2n+1)/3}^2}\\[5pt]
&\quad\times\bigg\{3-[2n]^2\bigg(\sum_{i=1}^{(2n+1)/3}\frac{3q^{3i-2}}{[3i-2]^2}-\frac{1+5q+3q^2}{1+q}\bigg)\bigg\}.
\end{align*}
\end{thm}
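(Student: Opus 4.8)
The plan is to deploy the creative microscoping method of Guo and Zudilin. Set $N=(2n+1)/3$ (an integer, since $n\equiv1\pmod 6$) and introduce a free parameter $a$, considering the deformed sum
$$
S_n(a)=\sum_{k=0}^{N}\frac{(aq^{-1},\,q^{-1}/a,\,q^{-1};q^{3})_k}{(aq^{3},\,q^{3}/a,\,q^{3};q^{3})_k}\,q^{9k},
$$
so that the left-hand side of Theorem~\ref{thm-a} equals $S_n(1)$, and note the symmetry $S_n(a)=S_n(1/a)$. The first step is to evaluate $S_n(a)$ for $a=q^{-n}$ (equivalently, by the symmetry, for $a=q^{n}$): once brought — if necessary by reversing the order of summation — into terminating well-poised form in base $q^{3}$, the series $S_n(q^{\pm n})$ is summed by the basic hypergeometric summation formula quoted in the abstract, yielding an explicit closed form $Q_n(a)$, a quotient of $q$-shifted factorials in $a$ and $q$, with $S_n(q^{\pm n})=Q_n(q^{\pm n})$.

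Since the denominator of $S_n(a)$, regarded as a rational function of $a$ over $\mathbb{Q}(q)$, is coprime to $(1-aq^{n})(a-q^{n})$, the two evaluations show that $S_n(a)-Q_n(a)$ is divisible by $(1-aq^{n})(a-q^{n})$. Adjoining the companion congruence $S_n(a)\equiv Q_n(a)\pmod{\Phi_n(q)}$ — which follows from the same summation applied after specialising $q$ to a primitive $n$th root of unity, as in the elementary mod-$\Phi_n(q)$ arguments of \cite{Guo-adb} — and applying the Chinese remainder theorem for coprime polynomials ($(1-aq^{n})$, $(a-q^{n})$ and $\Phi_n(q)$ being pairwise coprime in $\mathbb{Q}[q,a]$), one obtains
$$
S_n(a)\equiv Q_n(a)\pmod{(1-aq^{n})(a-q^{n})\,\Phi_n(q)}.
$$
Setting $a=1$ and using that $(1-q^{n})^{2}\Phi_n(q)$ is divisible by $\Phi_n(q)^{3}$, the theorem reduces to computing $Q_n(1)$ modulo $\Phi_n(q)^{3}$; equivalently, to extracting the $\Phi_n(q)$-adic expansion of $Q_n(1)$ through order two. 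The prefactor $q^{(2-2n)/3}(1+q)\,(q;q^{3})_N^{2}/(q^{3};q^{3})_N^{2}$ absorbs the double zero of $Q_n(1)$ at $q=\zeta$, and the bracket $\{3-[2n]^{2}(\cdots)\}$ is the next-order correction: the sum $\sum_{i=1}^{N}3q^{3i-2}/[3i-2]^{2}$ appears as the second-order term of the pertinent logarithmic derivative, $[2n]^{2}$ as the natural $\Phi_n(q)^{2}$-divisible weight, and $\tfrac{1+5q+3q^{2}}{1+q}$ collects the low-order contributions of the factor $q^{9k}$, of the shift $q^{(2-2n)/3}$, and of $1+q$.

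Two steps look like the real work. The first is identifying the precise basic hypergeometric summation that evaluates $S_n(q^{\pm n})$ in well-poised form and verifying that the two evaluations are restrictions of one and the same rational function $Q_n(a)$, so that the Chinese-remainder gluing is valid. The second, and more delicate, is the $\Phi_n(q)$-adic expansion of $Q_n(1)$: the summand $i=i_0:=(n+2)/3$ in $\sum_{i=1}^{N}3q^{3i-2}/[3i-2]^{2}$ has $[3i_0-2]=[n]$, which vanishes at $q=\zeta$, so this single term carries a double pole there, and one must check that it is cancelled exactly by the double zero $[2n]^{2}\equiv0\pmod{\Phi_n(q)^{2}}$ — concretely, $[2n]^{2}/[n]^{2}=(1+q^{n})^{2}$ stays regular — while keeping careful track of which portions of $Q_n$ contribute modulo $\Phi_n(q)^{3}$ and which only modulo $\Phi_n(q)$. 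The remaining manipulations — rewriting $(q;q^{3})_N$, $(q^{3};q^{3})_N$, $[2n]$ in terms of $q$-integers and reading off expansion coefficients — are routine.
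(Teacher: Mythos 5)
Your overall architecture---deform with a parameter $a$, evaluate at special values of $a$, glue the resulting congruences with the Chinese remainder theorem for coprime polynomials, then let $a\to1$ so that the modulus collapses onto $\Phi_n(q)^3$---is indeed the skeleton of the paper's argument. But two of your concrete choices fail. First, the evaluation point $a=q^{\pm n}$ is wrong for this theorem. The truncated sum equals a complete terminating series only if some numerator factor vanishes for all $k>N=(2n+1)/3$; since $n\equiv1\pmod 6$ gives $n+1\equiv 2\pmod 3$, none of $(q^{-n-1};q^3)_k$, $(q^{n-1};q^3)_k$, $(q^{-1};q^3)_k$ ever vanishes, so $S_n(q^{\pm n})$ is merely a truncation of a non-terminating series and admits no closed-form evaluation; the divisibility of $S_n(a)-Q_n(a)$ by $(1-aq^{n})(a-q^{n})$ cannot be established this way. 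One must evaluate at $a=q^{\pm 2n}$, for which $q^{-2n-1}=(q^{3})^{-N}$ makes the series terminate exactly at $k=N$; this is why the paper works modulo $(1-aq^{tn})(a-q^{tn})$ with $t=2$ when $n\equiv1\pmod 3$ and $t=1$ when $n\equiv2\pmod 3$. Since $(1-q^{2n})^{2}$ is still divisible by $\Phi_n(q)^{2}$, the endgame at $a=1$ is unaffected by this correction.

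Second, your deformation places $a$ in the denominator as $(aq^{3},q^{3}/a;q^{3})_k$, and this is incompatible with the only summation available here, namely the $q$-Saalsch\"utz-type evaluation (Lemma 2.1 of the paper, the ``summation quoted in the abstract''). In base $q^{3}$ that lemma requires lower parameters $q^{3}$ and $ABq^{6-3m}$; with your denominators the lower parameters at $a=q^{\pm 2n}$ are $q^{3\mp 2n}$ and $q^{3\pm 2n}$, and the series is neither balanced in the required sense nor well-poised, so no evaluation applies. The paper instead leaves the $a$-deformation in the numerator only (which preserves the Saalsch\"utz structure) and introduces a second parameter $b$, replacing one ratio $(q^{-1};q^{3})_k/(q^{3};q^{3})_k$ by $(q^{-1}/b;q^{3})_k/(q^{3}/b;q^{3})_k$; the additional congruence modulo $b-q^{2n}$, combined with the $a$-congruence by the Chinese remainder theorem and then specialized at $b=1$, is what supplies the third factor $\Phi_n(q)$ that you propose to obtain from an unproved root-of-unity argument. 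Your closing observations (the term $i_0=(n+2)/3$ carrying $[n]^2$ in the denominator, cancelled by $[2n]^2/[n]^2=(1+q^n)^2$) are correct but do not repair these gaps, and the actual extraction of the bracket $\{3-[2n]^2(\cdots)\}$, which the paper obtains from an explicit L'H\^opital computation of the $a\to1$ limit, is left entirely unperformed.
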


\begin{thm}\label{thm-b}
Let $n$ be a positive integer with $n\equiv 5\pmod 6$. Then, modulo
$\Phi_n(q)^3$,
\begin{align*}
\sum_{k=0}^{(n+1)/3}\frac{(q^{-1};q^3)_k^3}{(q^3;q^3)_k^3}q^{9k}
&\equiv
q^{(2-n)/3}(1+q)\frac{(q;q^3)_{(n+1)/3}^2}{(q^3;q^3)_{(n+1)/3}^2}\\[5pt]
&\quad\times\bigg\{\theta_n(q)+[n]^2\bigg(\sum_{i=1}^{(n+1)/3}\frac{3q^{3i-2}}{[3i-2]^2}-\frac{1+5q+3q^2}{1+q}\bigg)\bigg\},
\end{align*}
where
$$\theta_n(q)=\frac{(1-q-3q^2)(1-2q^n)+(4-4q-6q^2+3q^3)q^{2n}}{(1+q)(q-q^n)^2}.$$
\end{thm}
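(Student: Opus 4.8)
The plan is to run the creative microscoping method of Guo and Zudilin together with the Chinese remainder theorem for coprime polynomials. First, introduce an auxiliary parameter $a$ and consider the deformed sum
\[
S_n(a,q)=\sum_{k=0}^{(n+1)/3}\frac{(q^{-1};q^3)_k\,(aq^{-1};q^3)_k\,(q^{-1}/a;q^3)_k}{(q^3;q^3)_k\,(aq^3;q^3)_k\,(q^3/a;q^3)_k}\,q^{9k},
\]
which specializes to the left-hand side of the theorem when $a=1$. For the two values $a=q^{n}$ and $a=q^{-n}$ one of the deformed $q$-shifted factorials becomes $(q^{-n-1};q^3)_k$, and since $(n+1)/3$ is an integer precisely because $n\equiv 5\pmod 6$, this factor vanishes as soon as $k>(n+1)/3$; so at these two values $S_n(a,q)$ is a terminating series, to which the basic hypergeometric summation formula underlying this paper applies and evaluates $S_n(q^{n},q)$ and $S_n(q^{-n},q)$ in closed form. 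Because $\gcd(n,3)=1$, none of the denominators $(q^3;q^3)_k,(aq^3;q^3)_k,(q^3/a;q^3)_k$ vanishes at $a=q^{\pm n}$, so these two evaluations pin down $S_n(a,q)$ modulo $1-aq^{n}$ and modulo $a-q^{n}$; splicing them together with the Chinese remainder theorem for coprime polynomials produces a parametric congruence $S_n(a,q)\equiv C_n(a,q)\pmod{(1-aq^{n})(a-q^{n})}$ for an explicit rational function $C_n(a,q)$.

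Setting $a=1$ in this parametric congruence by itself only gives it modulo $(1-q^{n})^{2}$, hence modulo $\Phi_n(q)^{2}$. The missing factor is furnished by the cube $(q^{-1};q^3)_k^{3}$ in the summand: one of the three copies of $(q^{-1};q^3)_k$ was left undeformed, and comparing $S_n(a,q)$ with the same sum continued past $k=(n+1)/3$ shows that every further term is divisible by $1-q^{n}$ (from the undeformed factor) as well as by $1-aq^{n}$ and $a-q^{n}$ (up to a unit, from the two deformed factors), so the parametric congruence may be sharpened to one modulo $(1-aq^{n})(a-q^{n})(1-q^{n})$. Putting $a=1$ now yields a congruence modulo $(1-q^{n})^{3}$, and since $\Phi_n(q)^{3}\mid(1-q^{n})^{3}$ we arrive at
\[
\sum_{k=0}^{(n+1)/3}\frac{(q^{-1};q^3)_k^3}{(q^3;q^3)_k^3}q^{9k}\equiv C_n(1,q)\pmod{\Phi_n(q)^3}.
\]

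The remaining task---and the main obstacle---is to identify $C_n(1,q)$ with the asserted right-hand side. The closed form $C_n(a,q)$ produced by the summation formula has numerator and denominator both vanishing at $a=1$, so $C_n(1,q)$ must be extracted as a limit; differentiating the $q$-shifted factorials with respect to $a$ pulls down logarithmic-derivative sums, and carrying this expansion to the order dictated by the surviving $1-q^{n}$'s turns $\frac{(q;q^3)_{(n+1)/3}^2}{(q^3;q^3)_{(n+1)/3}^2}$ into the prefactor in the statement, the leftover $1-q^{n}$'s into $[n]^2$, and the second-order part of the expansion into the harmonic-type sum $\sum_{i=1}^{(n+1)/3}\frac{3q^{3i-2}}{[3i-2]^2}$, the squared $q$-integers $[3i-2]^2$ being exactly the fingerprint of that second-order term. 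The residual rational material, after reducing $q^{n}\equiv 1\pmod{\Phi_n(q)}$ wherever that is legitimate while keeping intact the factor $(q-q^{n})^{-2}$, which is a unit modulo $\Phi_n(q)$, must then collapse into $\theta_n(q)$ together with the constant $-\frac{1+5q+3q^2}{1+q}$; carrying out this reduction cleanly is where essentially all of the work lies. The same three-step scheme, with $(2n+1)/3$ in place of $(n+1)/3$, proves Theorem~\ref{thm-a}; there the corresponding closed form is not weighed down by a factor like $(q-q^{n})^{-2}$, which is why its right-hand side carries the bare constant $3$ rather than $\theta_n(q)$.
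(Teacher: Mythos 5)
Your overall strategy (deform with a parameter, evaluate at $a=q^{\pm n}$ by a closed-form summation, splice with the Chinese remainder theorem, then let $a\to1$ via a limit) is the right family of ideas, but two of your key steps do not work as stated. First, your symmetric deformation, with $(aq^3;q^3)_k(q^3/a;q^3)_k$ in the denominator, is incompatible with the summation formula that drives this paper. Lemma~\ref{lem-a} evaluates a $_3\phi_2$ whose denominator is $(q;q)_k^2(abq^{2-m};q)_k$, i.e.\ the factor $(q^3;q^3)_k^2$ must survive undeformed. At $a=q^{\pm n}$ your series has lower parameters $q^{3+n}$ and $q^{3-n}$ instead of $q^3$, so neither Lemma~\ref{lem-a} nor the $q$-Saalsch\"utz identity applies and the promised closed forms for $S_n(q^{\pm n},q)$ do not exist by these means. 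This is exactly why the paper's Theorem~\ref{thm-c} deforms asymmetrically: the parameter $a$ touches only two numerator slots, $(aq^{-1},q^{-1}/a;q^3)_k$, leaving $(q^3;q^3)_k^2$ intact, and a \emph{second} parameter $b$ is introduced to deform one numerator and one denominator slot, so that Lemma~\ref{lem-a} applies both at $a=q^{\pm tn}$ and at $b=q^{tn}$.

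Second, your route to the exponent $3$ is a non sequitur. Showing that every term with $k>(n+1)/3$ is divisible by $(1-q^n)(1-aq^n)(a-q^n)$ only proves that the truncated sum and an extended sum agree modulo that product; it does not upgrade the congruence $S_n(a,q)\equiv C_n(a,q)\pmod{(1-aq^{n})(a-q^{n})}$ to one modulo the triple product, because you have no independent evaluation of either sum modulo the third factor. The paper manufactures the third factor from the parameter $b$: Lemma~\ref{lem-a} applied at $b=q^{tn}$ yields a congruence modulo $(b-q^{tn})$, the Chinese remainder theorem for the three coprime moduli assembles \eqref{eq:wei-aa}, and only afterwards does $b\to1$ turn $(b-q^{tn})$ into a multiple of $\Phi_n(q)$ while $a\to1$ (with L'H\^opital, since the right-hand side carries $(1-a)^2$ in denominators) supplies $\Phi_n(q)^2$ more. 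Finally, the identification of the limit with the stated right-hand side--including the emergence of $\theta_n(q)$ from the exact evaluation rather than from a reduction $q^n\equiv1$--is precisely the content you defer, so even granting the earlier steps the proof is not complete.
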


It is not difficult to understand that Theorems \ref{thm-a} and
\ref{thm-b} give a $q$-analogue of \eqref{eq:liu}. Letting $n=p$ be
an prime and taking $q\to 1$ in the above two theorems, we obtain
the following conclusions.

\begin{cor}\label{cor-a}
Let $p$ be an prime such that $p\equiv 1\pmod{6}$. Then
\begin{align*}
\sum_{k=0}^{(2p+1)/3}\frac{(-1/3)_k^3}{k!^3} \equiv
\frac{6(1/3)_{(2p+1)/3}^2}{(1)_{(2p+1)/3}^2}\bigg\{1+6p^2-\sum_{i=1}^{(2p+1)/3}\frac{4p^2}{(3i-2)^2}\bigg\}\pmod{p^3}.
\end{align*}
\end{cor}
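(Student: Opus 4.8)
The plan is to obtain Corollary \ref{cor-a} as the $n=p$, $q\to 1$ specialization of Theorem \ref{thm-a}. Since $p\equiv 1\pmod 6$, the integer $M:=(2p+1)/3$ and all quantities below are well defined. The starting point is that $\Phi_p(1)=p$, so a congruence modulo $\Phi_p(q)^3$ in $q$ yields, upon $q\to 1$, a congruence modulo $p^3$ in $\mathbb{Z}_p$, provided the rational function $C(q)$ defined by writing the difference of the two sides of Theorem \ref{thm-a} as $\Phi_p(q)^3\,C(q)$ has no pole at $q=1$ and $C(1)\in\mathbb{Z}_p$. The only index demanding attention is $i_0:=(p+2)/3$ (an integer since $p\equiv 1\pmod 3$), where $3i_0-2=p$ and hence $[3i_0-2]=[p]$ is divisible by $\Phi_p(q)$; but this term enters Theorem \ref{thm-a} only through the product $[2p]^2\cdot\frac{3q^{3i-2}}{[3i-2]^2}$, and
\[
[2p]^2\cdot\frac{3q^{p}}{[p]^2}=3q^{p}\Bigl(\tfrac{1-q^{2p}}{1-q^{p}}\Bigr)^2=3q^{p}(1+q^{p})^2
\]
is a polynomial, so no denominator survives and the specialization is legitimate.

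It then remains to compute both sides in the limit. Via the elementary limit $\lim_{q\to 1}(q^{a};q^{3})_k/(1-q^{3})^k=(a/3)_k$, the left-hand side of Theorem \ref{thm-a} tends termwise to $\sum_{k=0}^{M}(-1/3)_k^3/k!^3$, the left-hand side of Corollary \ref{cor-a}. On the right-hand side, $q^{(2-2p)/3}\to 1$, $1+q\to 2$, $(q;q^3)_M^2/(q^3;q^3)_M^2\to(1/3)_M^2/(M!)^2$, $[2p]\to 2p$, $\frac{3q^{3i-2}}{[3i-2]^2}\to\frac{3}{(3i-2)^2}$ for every $i$ (the value at $i_0$ being $3/p^2$, in accordance with the polynomial displayed above), and $\frac{1+5q+3q^2}{1+q}\to\frac{9}{2}$.

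Assembling these, the limit of the right-hand side of Theorem \ref{thm-a} equals
\[
2\cdot\frac{(1/3)_M^2}{(M!)^2}\Bigl\{3-4p^2\Bigl(\sum_{i=1}^{M}\tfrac{3}{(3i-2)^2}-\tfrac{9}{2}\Bigr)\Bigr\}
=\frac{6\,(1/3)_M^2}{(M!)^2}\Bigl\{1+6p^2-\sum_{i=1}^{M}\tfrac{4p^2}{(3i-2)^2}\Bigr\},
\]
where I used $4p^2\cdot\frac{9}{2}=18p^2$ and then factored out the common $3$. Since $(1)_M=M!$ and $M=(2p+1)/3$, this is precisely the right-hand side of Corollary \ref{cor-a}.

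The only real obstacle is the verification in the first paragraph that the $\Phi_p(q)^3$-congruence descends to a $p^3$-congruence — concretely, that the term with $3i-2=p$ does not spoil the $q\to 1$ limit; everything else is routine evaluation of limits and elementary arithmetic with the constants.
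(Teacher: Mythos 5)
Your proposal is correct and follows exactly the route the paper intends: Corollary \ref{cor-a} is obtained by setting $n=p$ and letting $q\to 1$ in Theorem \ref{thm-a}, with $\Phi_p(1)=p$ turning the $\Phi_p(q)^3$-congruence into one modulo $p^3$. The paper states this specialization without detail, and your added verification that the $i_0=(p+2)/3$ term is harmless because $[2p]^2/[p]^2=(1+q^p)^2$ is a welcome (and correct) filling of the one nontrivial gap.
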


\begin{cor}\label{cor-b}
Let $p$ be an prime such that $p\equiv 5\pmod{6}$. Then
\begin{align*}
\sum_{k=0}^{(p+1)/3}\frac{(-1/3)_k^3}{k!^3} \equiv
\frac{54(1/3)_{(p+1)/3}^2}{(1)_{(p-2)/3}^2}\bigg\{1+\frac{p^2}{(p+1)^2}\sum_{i=1}^{(p+1)/3}\frac{1}{(3i-2)^2}\bigg\}\pmod{p^3}.
\end{align*}
\end{cor}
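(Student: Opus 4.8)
The plan is to deduce Corollary~\ref{cor-b} from Theorem~\ref{thm-b} by taking $n=p$ and letting $q\to1$: since $\Phi_p(1)=p$, the specialization $q\to1$ sends a congruence modulo $\Phi_p(q)^3$ to a congruence modulo $p^3$ in $\mathbb{Z}_p$, provided every rational function of $q$ occurring in Theorem~\ref{thm-b} is regular at $q=1$ with a denominator taking a $p$-adic unit value there. First I would check this regularity. The quotients $(q^{-1};q^3)_k^3/(q^3;q^3)_k^3$ and $(q;q^3)_{(n+1)/3}^2/(q^3;q^3)_{(n+1)/3}^2$ are regular at $q=1$ since numerator and denominator vanish there to the same order; each $[3i-2]$ with $1\le i\le(p+1)/3$ satisfies $[3i-2]\big|_{q=1}=3i-2$ with $1\le3i-2\le p-1$, a unit modulo $p$; and $\theta_p(q)$, though written with the vanishing denominator $(1+q)(q-q^p)^2$, has numerator also vanishing at $q=1$ --- to order exactly $2$, by a short Taylor expansion --- hence is regular there. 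Moreover none of these denominators shares a factor with $\Phi_p(q)$, so Theorem~\ref{thm-b}, cleared of denominators, becomes a divisibility $\Phi_p(q)^3\mid A(q)$ in $\mathbb{Z}[q]$; evaluating at $q=1$ gives $p^3\mid A(1)$, and dividing back by the denominator, which takes a $p$-adic unit value at $q=1$, yields the desired congruence modulo $p^3$ between the $q\to1$ limits of the two sides.

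Next I would evaluate those limits. Using $\lim_{q\to1}(q^a;q^b)_k/(q^b;q^b)_k=(a/b)_k/k!$, the left side of Theorem~\ref{thm-b} tends to $\sum_{k=0}^{(p+1)/3}(-1/3)_k^3/k!^3$ and the prefactor $q^{(2-n)/3}(1+q)(q;q^3)_{(n+1)/3}^2/(q^3;q^3)_{(n+1)/3}^2$ tends to $2\,(1/3)_{(p+1)/3}^2/(1)_{(p+1)/3}^2$; inside the braces one has $[n]^2\to p^2$, $\sum_{i=1}^{(p+1)/3}3q^{3i-2}/[3i-2]^2\to3\Sigma$ where $\Sigma:=\sum_{i=1}^{(p+1)/3}1/(3i-2)^2$, and $(1+5q+3q^2)/(1+q)\to9/2$. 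The one non-routine evaluation is $\lim_{q\to1}\theta_p(q)$: setting $q=e^t$ and expanding to order $t^2$ gives numerator $=-3(n^2-2)t^2+O(t^3)$ and denominator $=2(n-1)^2t^2+O(t^3)$, so $\lim_{q\to1}\theta_n(q)=-3(n^2-2)/\bigl(2(n-1)^2\bigr)$ and in particular $\lim_{q\to1}\theta_p(q)=-3(p^2-2)/\bigl(2(p-1)^2\bigr)$ (equivalently, differentiate numerator and denominator twice).

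Finally I would combine and simplify modulo $p^3$. Using $(p-1)^{-2}\equiv1+2p+3p^2\pmod{p^3}$ one finds
\[
\frac{-3(p^2-2)}{2(p-1)^2}+p^2\Bigl(3\Sigma-\tfrac92\Bigr)\equiv3+6p+3p^2+3p^2\Sigma=3(p+1)^2+3p^2\Sigma\pmod{p^3};
\]
multiplying by the prefactor $2\,(1/3)_{(p+1)/3}^2/(1)_{(p+1)/3}^2$ and using $(1)_{(p+1)/3}=\tfrac{p+1}{3}(1)_{(p-2)/3}$, the factor $(p+1)^2$ cancels and the right side becomes
\[
\frac{54\,(1/3)_{(p+1)/3}^2}{(1)_{(p-2)/3}^2}\Bigl(1+\frac{p^2}{(p+1)^2}\,\Sigma\Bigr),
\]
which is exactly the claim of Corollary~\ref{cor-b}; the rearrangements modulo $p^3$ are legitimate because the prefactor and each $3i-2$ are units modulo $p$. (The derivation of Corollary~\ref{cor-a} from Theorem~\ref{thm-a} is entirely analogous, and simpler, since the corresponding bracket contains the literal constant $3$ rather than a $0/0$ expression.) The main --- indeed essentially the only --- place where care is needed is the bookkeeping of this last step: ensuring that the $(p-1)^{-2}$ expansion, the constant $9/2$, and the term $p^2\Sigma$ conspire to produce the square $(p+1)^2$ that cancels the prefactor's denominator; everything else is a routine passage to the limit.
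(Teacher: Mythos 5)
Your proposal is correct and follows exactly the route the paper indicates (set $n=p$ in Theorem~\ref{thm-b} and let $q\to1$), merely supplying the details that the paper omits; in particular your value $\lim_{q\to1}\theta_p(q)=-3(p^2-2)/\bigl(2(p-1)^2\bigr)$ and the subsequent reduction to $3(p+1)^2+3p^2\Sigma$ modulo $p^3$ check out, and the factor $(1)_{(p+1)/3}=\tfrac{p+1}{3}(1)_{(p-2)/3}$ indeed produces the constant $54$ and absorbs $(p+1)^2$. No gaps.
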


In order to explain the equivalence of \eqref{eq:liu}
 and Corollaries \ref{cor-a} and \ref{cor-b}, we need to verify the following relations.

\begin{prop}\label{prop-a}
Let $p$ be a prime such that $p\equiv 1\pmod{6}$. Then
\begin{align*}
\frac{(1/3)_{(2p+1)/3}^2}{(1)_{(2p+1)/3}^2}\bigg\{1+6p^2-\sum_{i=1}^{(2p+1)/3}\frac{4p^2}{(3i-2)^2}\bigg\}
\equiv-3p^2\Gamma_p(2/3)^6 \pmod{p^3}.
\end{align*}
\end{prop}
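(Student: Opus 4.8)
The plan is to reduce the proposition, by a $p$-adic valuation analysis, to a clean congruence for a ratio of Pochhammer symbols, and then to evaluate that ratio modulo $p$ with Morita's $p$-adic Gamma function. Put $N=(2p+1)/3$. Among the integers $3i-2$ with $1\le i\le N$, that is $1,4,7,\dots,2p-1$, exactly one is divisible by $p$: the value $3i-2=p$, attained at $i=(p+2)/3$, which is an integer since $p\equiv1\pmod3$ and satisfies $1\le(p+2)/3\le N$; moreover $p^{2}>2p-1$, so $p$ divides it only once. Hence $(1/3)_{N}/(1)_{N}$ has $p$-adic valuation exactly $1$ (the denominator $N!$ is a $p$-adic unit because $N<p$), and we may write $(1/3)_{N}^{2}/(1)_{N}^{2}=p^{2}v^{2}$ with $v\in\mathbb{Z}_{p}^{\times}$. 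On the other hand, in $\sum_{i=1}^{N}\frac{4p^{2}}{(3i-2)^{2}}$ the single term with $3i-2=p$ equals exactly $4$, while every remaining term is divisible by $p^{2}$; so the braced factor equals $1+6p^{2}-4-p^{2}(\cdots)=-3+p^{2}(\cdots)$ for some $p$-adic integer. Multiplying, the left-hand side of the proposition is $p^{2}v^{2}\bigl(-3+p^{2}(\cdots)\bigr)\equiv-3p^{2}v^{2}\pmod{p^{3}}$, the cross term carrying a factor $p^{4}$. Thus the proposition is equivalent to $v^{2}\equiv\Gamma_{p}(2/3)^{6}\pmod p$, where $p\,v=(1/3)_{N}/(1)_{N}$.

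To evaluate $v$, I would extract from $(1/3)_{N}=\prod_{k=0}^{N-1}\bigl(\tfrac13+k\bigr)$ the unique factor of positive $p$-adic valuation, the one at $k=(p-1)/3$, which equals $p/3$, and telescope the remaining product through the functional equation $\Gamma_{p}(x+1)/\Gamma_{p}(x)\in\{-x,-1\}$ (the value $-1$ occurring at, and only at, that same index). This gives
\[
(1/3)_{N}=(-1)^{N}\,\frac p3\cdot\frac{\Gamma_{p}\bigl((2p+2)/3\bigr)}{\Gamma_{p}(1/3)},\qquad
(1)_{N}=N!=(-1)^{N+1}\,\Gamma_{p}\bigl((2p+4)/3\bigr),
\]
the second being the elementary case in which no factor is divisible by $p$, so that $v=-\tfrac13\,\Gamma_{p}\bigl((2p+2)/3\bigr)\big/\bigl(\Gamma_{p}(1/3)\,\Gamma_{p}((2p+4)/3)\bigr)$. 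I would then reduce modulo $p$ using the continuity of $\Gamma_{p}$, namely $\Gamma_{p}(x)\equiv\Gamma_{p}(y)\pmod p$ whenever $x\equiv y\pmod p$ in $\mathbb{Z}_{p}$: since $(2p+2)/3\equiv2/3$ and $(2p+4)/3\equiv4/3\pmod p$, and $\Gamma_{p}(4/3)=-\tfrac13\,\Gamma_{p}(1/3)$ again by the functional equation, one obtains $v\equiv\Gamma_{p}(2/3)/\Gamma_{p}(1/3)^{2}\pmod p$. Squaring and invoking the reflection formula $\Gamma_{p}(1/3)\,\Gamma_{p}(2/3)=(-1)^{\langle-1/3\rangle_{p}-1}=\pm1$, which forces $\Gamma_{p}(1/3)^{4}=1/\Gamma_{p}(2/3)^{4}$, yields $v^{2}\equiv\Gamma_{p}(2/3)^{2}/\Gamma_{p}(1/3)^{4}\equiv\Gamma_{p}(2/3)^{6}\pmod p$, as required.

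The main obstacle is the sign and residue bookkeeping inside the $p$-adic Gamma reduction: correctly isolating the single $p$-divisible factor when telescoping, keeping track of the parities $(-1)^{N}$ and $(-1)^{N+1}$ and of the reflection sign $(-1)^{\langle-1/3\rangle_{p}-1}$, and checking that after squaring all of these signs cancel so that the surviving power of $\Gamma_{p}(2/3)$ is exactly $6$ with no stray constant. A numerical check at $p=7$ (where $N=5$, $(1/3)_{5}/(1)_{5}=7\cdot13/729$, $v\equiv-1\pmod 7$, and $\Gamma_{7}(2/3)\equiv\Gamma_{7}(3)=-2$) confirms that both sides of the proposition reduce to $-147\pmod{343}$.
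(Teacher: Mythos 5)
Your argument is correct and follows essentially the same route as the paper's proof: both isolate the single term $3i-2=p$ in the sum (turning the braced factor into $-3+O(p^2)$), extract the lone factor $p$ from $(1/3)_{(2p+1)/3}$, rewrite the Pochhammer ratio via Morita's $\Gamma_p$, and then reduce modulo $p$ using the reflection formula and the congruence $\Gamma_p(x)\equiv\Gamma_p(y)\pmod p$ for $x\equiv y\pmod p$. The only cosmetic difference is that you shift by $\Gamma_p(4/3)=-\tfrac13\Gamma_p(1/3)$ and apply reflection to $\Gamma_p(1/3)^4$, whereas the paper pairs $\Gamma_p(1/3)$ and $\Gamma_p((1+2p)/3)$ with their reflections before reducing modulo $p$; the two bookkeepings are equivalent.
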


\begin{prop}\label{prop-b}
Let $p$ be a prime such that $p\equiv 5\pmod{6}$. Then
\begin{align*}
\frac{(1/3)_{(p+1)/3}^2}{(1)_{(p-2)/3}^2}\bigg\{1+\frac{p^2}{(p+1)^2}\sum_{i=1}^{(p+1)/3}\frac{1}{(3i-2)^2}\bigg\}
\equiv\Gamma_p(2/3)^6\pmod{p^3}.
\end{align*}
\end{prop}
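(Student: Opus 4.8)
The plan is to carry the left-hand side over to the $p$-adic Gamma function and then recognize $\sum_{i=1}^{(p+1)/3}1/(3i-2)^2$ as exactly the second-order Taylor correction forced by the expansion of $\Gamma_p$. Throughout put $m=(p+1)/3$, so $(p-2)/3=m-1$ and $3m-2=p-1$, and note that $p\equiv5\pmod6$ forces $p\equiv2\pmod3$. First I would observe that none of the numbers $1/3+k$ $(0\le k\le m-1)$ and none of $1,2,\dots,m-1$ is a multiple of $p$ — indeed $1+3k$ runs over $1,4,\dots,p-1$, all lying in $[1,p-1]$, and $m-1<p$. Hence the standard identity $(x)_n=(-1)^n\Gamma_p(x+n)/\Gamma_p(x)$ (valid whenever no $x+k$, $0\le k<n$, is divisible by $p$), together with $\Gamma_p(1)=-1$ and $1/3+m=(p+2)/3$, gives the exact identity
$$\frac{(1/3)_m}{(1)_{m-1}}=\frac{\Gamma_p((p+2)/3)}{\Gamma_p(1/3)\,\Gamma_p((p+1)/3)}$$
in $\mathbb Z_p$. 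Writing $S=\sum_{i=1}^m 1/(3i-2)^2$, it therefore suffices to prove
$$\frac{\Gamma_p((p+2)/3)^2}{\Gamma_p(1/3)^2\,\Gamma_p((p+1)/3)^2}\Bigl(1+\tfrac{p^2}{(p+1)^2}S\Bigr)\equiv\Gamma_p(2/3)^6\pmod{p^3}.$$

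Next I would expand around $1/3$ and $2/3$. With $G_1=\Gamma_p'/\Gamma_p$, $G_2=\Gamma_p''/\Gamma_p$ and $g'=G_2-G_1^2=(\Gamma_p'/\Gamma_p)'$ — all of which lie in $\mathbb Z_p$ at $p$-adic integer arguments, since $\Gamma_p$ is a unit on $\mathbb Z_p$ with Taylor coefficients in $\mathbb Z_p$ — the second-order expansion $\Gamma_p(a+t)\equiv\Gamma_p(a)\bigl(1+G_1(a)t+\tfrac12 G_2(a)t^2\bigr)\pmod{p^3}$ holds for $v_p(t)\ge1$; the plan is to apply it with $a\in\{1/3,2/3\}$ and $t=p/3$, using $(p+1)/3=1/3+p/3$ and $(p+2)/3=2/3+p/3$. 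From the reflection formula $\Gamma_p(x)\Gamma_p(1-x)=(-1)^{\langle-x\rangle_p-1}$, differentiating once gives $G_1(1/3)=G_1(2/3)$ and differentiating twice gives $g'(1/3)=-g'(2/3)$; moreover $\langle-1/3\rangle_p=(2p-1)/3$ makes the exponent $(2p-4)/3=2(p-2)/3$ even, so $\Gamma_p(1/3)\Gamma_p(2/3)=1$ and hence $\Gamma_p(2/3)^2/\Gamma_p(1/3)^4=\Gamma_p(2/3)^6$. Substituting the expansions into the squared quotient, the $O(p)$ terms cancel (because $G_1(1/3)=G_1(2/3)$) and the $O(p^2)$ part collapses via $G_2(2/3)-G_2(1/3)=g'(2/3)-g'(1/3)=2g'(2/3)$, yielding
$$\frac{\Gamma_p((p+2)/3)^2}{\Gamma_p(1/3)^2\,\Gamma_p((p+1)/3)^2}\equiv\Gamma_p(2/3)^6\Bigl(1+\tfrac{2p^2}{9}g'(2/3)\Bigr)\pmod{p^3}.$$
Since $(p+1)^2\equiv1\pmod p$, the desired congruence is then equivalent to the single congruence $S\equiv-\tfrac29 g'(2/3)\pmod p$.

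To finish, I would differentiate the functional equation $\Gamma_p(x+1)=-x\,\Gamma_p(x)$ twice to get $g'(x+1)=g'(x)-1/x^2$ for $p\nmid x$; iterating this over the $m$ admissible steps from $x=1/3$ to $x=(p+2)/3=1/3+m$ produces the exact identity $g'((p+2)/3)=g'(1/3)-\sum_{k=0}^{m-1}1/(1/3+k)^2=g'(1/3)-9S$. Reducing modulo $p$ and using $g'((p+2)/3)\equiv g'(2/3)\pmod p$ (continuity, since $(p+2)/3\equiv2/3$) together with $g'(1/3)\equiv-g'(2/3)$ gives $2g'(2/3)\equiv-9S$, i.e. $S\equiv-\tfrac29 g'(2/3)\pmod p$, which completes the proof. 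I expect the delicate points to be (i) the bookkeeping of the $p$-adic Taylor expansions through order $p^3$ — in particular justifying that $G_1,G_2,g'$ are $p$-adic integers, so that the $t^3$-error is genuinely $O(p^3)$ and the reductions modulo $p$ are legitimate — and (ii) the sign evaluation $\Gamma_p(1/3)\Gamma_p(2/3)=1$; the remaining steps are routine manipulations of the functional and reflection equations.
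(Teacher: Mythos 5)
Your proposal is correct, and its skeleton (pass to $\Gamma_p$, Taylor-expand to second order in $p/3$, reduce everything to one congruence mod $p$) matches the paper's; but the way you close that final congruence is genuinely different and worth noting. The paper first applies the reflection formula to rewrite the denominator, so that only expansions of $\Gamma_p((2\pm p)/3)$ about $2/3$ appear, and then establishes $G_2(2/3)-G_1(2/3)^2\equiv -H^{(2)}_{(2p-1)/3}\pmod p$ by importing three lemmas of Wang and Pan (relating $G_1(2/3),G_2(2/3)$ to $G_1(0),G_2(0)$ and the harmonic numbers $H_{(2p-1)/3},H^{(2)}_{(2p-1)/3}$), after which a separate chain of harmonic-sum manipulations shows $\sum_{i=1}^{(p+1)/3}(3i-2)^{-2}\equiv\frac29 H^{(2)}_{(2p-1)/3}\pmod p$. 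You instead expand about both $1/3$ and $2/3$, use the reflection formula only at the level of derivatives ($G_1(1/3)=G_1(2/3)$, $g'(1/3)=-g'(2/3)$, with $g'=(\Gamma_p'/\Gamma_p)'=G_2-G_1^2$), and obtain the needed relation $\sum_{i=1}^{(p+1)/3}(3i-2)^{-2}\equiv-\frac29 g'(2/3)\pmod p$ directly by iterating $g'(x+1)=g'(x)-1/x^2$ from $1/3$ to $(p+2)/3$ and reducing mod $p$. This makes the argument self-contained (no appeal to the Wang--Pan lemmas and no intermediate harmonic-number identity), at the cost of having to justify carefully the local analyticity facts you flag yourself: that $G_1,G_2,g'$ take values in $\mathbb{Z}_p$, that the order-$2$ Taylor expansion of $\Gamma_p$ is valid mod $p^3$ for increments in $p\mathbb{Z}_p$ (true here since $p\geq5$), and that $g'$ is constant mod $p$ on residue discs; these are exactly the inputs the cited lemmas package up. Your sign evaluations ($\langle-1/3\rangle_p=(2p-1)/3$, so $\Gamma_p(1/3)\Gamma_p(2/3)=1$) and the divisibility checks on $(1/3)_{(p+1)/3}$ and $(1)_{(p-2)/3}$ are all correct.
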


The rest of the paper is arranged as follows. The proof of Theorems \ref{thm-a} and \ref{thm-b} will be given in Section 2. To this end, we first derive
a $q$-supercongruence modulo $(1-aq^{tn})(a-q^{tn})(b-q^{tn})$, where
$t\in\{1,2\}$, by using a summation of basic hypergeometric series,  the creative microscoping
method, and the Chinese remainder theorem for coprime polynomials.  Finally, the proof of Propositions \ref{prop-a} and
\ref{prop-b} will be displayed in Section 3.

%%%%%%%%%%%%%%%%%%%%%%%%%%%%%%%%%%%%%%%%%%%%%%%%%%%%%%%%%%%%%%%%%%%%%%%%%%%%%%%%%%%%%%%%%%%%%%%%%%%%%%%%%%%%%%%%%%%%%%%%%%%%%%%%%%%%%%%%%%%%%%%%%%%%%%%%%%%%%%%%%%%%%%%%%%%%%%%%%%%%%%%%%%%%%%%%%%%%%%%%%%%%%%%%%%%
\section{Proof of Theorems \ref{thm-a} and \ref{thm-b}}
%%%%%%%%%%%%%%%%%%%%%%%%%%%%%%%%%%%%%%%%%%%%%%%%%%%%%%%%%%%%%%%%%%%%%%%%%%%%%%%%%%%%%%%%%%%%%%%%%%%%%%%%%%%%%%%%%
In order to prove Theorems \ref{thm-a} and \ref{thm-b}, we require the
following lemma.

\begin{lem}\label{lem-a}
\begin{align*}
& _{3}\phi_{2}\!\left[\begin{array}{cccccccc}
  a, b, q^{-m}\\
  q,  abq^{2-m}
\end{array};q,\, q^3 \right]
=\frac{(1/a, 1/b;q)_{m}}{(q,
1/ab;q)_{m}}\\[5pt]
&\quad\times\bigg\{\frac{q^m(1-q^m)(q-abq^2-(1+q-aq-bq)q^m)}{(1-abq)(aq-q^m)(bq-q^m)}-\frac{1-ab-(2-a-b)q^m}{(1-a)(1-b)}\bigg\}.
\end{align*}
\end{lem}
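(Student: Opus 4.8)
The plan is to evaluate the terminating $_3\phi_2$ series by reducing it to a known summation formula. The summand involves the triple $(a, b, q^{-m})$ over $(q, abq^{2-m})$ with argument $q^3$, which is not of a classical balanced or well-poised type in its raw form; the right-hand side, however, is a product of $q$-shifted factorials times a rational function with a two-term structure in braces. This bracketed structure strongly suggests that the left side should be split as a sum of two pieces, each of which is individually a summable (very-well-poised or balanced) $_3\phi_2$ or $_2\phi_1$. So the first step I would take is to look for a contiguous relation or a partial-fraction decomposition of the summand: writing $\frac{1}{(q;q)_k (abq^{2-m};q)_k}$ or a related factor as a combination of terms that telescope or that match the $q$-Pfaff--Saalsch\"utz / $q$-Karlsson--Minton family.

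Concretely, I expect the key identity to be a known evaluation of a (nearly-poised or Karlsson--Minton-type) $_3\phi_2$ — for instance one can try to recognize the series as a special or limiting case of the $q$-analogue of Watson's or Whipple's theorem, or more likely of the nonterminating/terminating $q$-Saalsch\"utz summation after a suitable substitution $a\mapsto a$, $b\mapsto b$, $c\mapsto q$, with the balancing condition forced by the parameter $abq^{2-m}$ in the bottom row. Since the bottom parameter is $abq^{2-m} = q\cdot(abq) \cdot q^{-m}$, and one of the numerator parameters is $q^{-m}$, the series is \emph{almost} Saalsch\"utzian but off by a shift in the powers of $q$ (argument $q^3$ rather than $q$), which accounts for the extra rational correction terms. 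I would therefore carry out the substitution, identify the "defect from balance," and express the sum via the known closed form plus correction, arriving at the $(1/a,1/b;q)_m/(q,1/ab;q)_m$ prefactor by a standard reversal-of-summation manipulation $(x;q)_k \leftrightarrow (q^{1-m}/x;q)_m$ on the terminating series.

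The main obstacle will be pinning down exactly which summation theorem applies and handling the bookkeeping of the two correction terms in the braces: one must verify that after the partial-fraction split the two resulting series sum to the two explicit rational expressions $\frac{q^m(1-q^m)(q-abq^2-(1+q-aq-bq)q^m)}{(1-abq)(aq-q^m)(bq-q^m)}$ and $\frac{1-ab-(2-a-b)q^m}{(1-a)(1-b)}$ respectively, including the correct signs and the common prefactor. An alternative, cleaner route — and the one I would actually recommend as a fallback if the direct summation is elusive — is to verify the identity by the standard \emph{creative telescoping / Gosper} method: denote the left side $S_m$, guess from the right side a hypergeometric-plus-rational closed form $R_m$, and check that both $S_m$ and $R_m$ satisfy the same first-order (in $m$) $q$-recurrence together with agreement at $m=0$ (where both sides equal $1$) and $m=1$ (a short finite check). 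Since the right-hand side is an explicit elementary expression, producing its $q$-recurrence is mechanical, and matching it against the telescoped recurrence for $S_m$ reduces the whole lemma to a finite rational-function identity in $q$, $a$, $b$ — tedious but routine, and fully rigorous. I would present the proof in that recurrence-checking form, relegating the polynomial verification to a direct computation.
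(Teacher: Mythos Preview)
Your instinct that the two-term structure in braces arises from splitting into two Saalsch\"utz-summable pieces is correct, but your primary plan stops short of the key step and you yourself flag ``the main obstacle will be pinning down exactly which summation theorem applies.'' The missing idea is this: the argument $q^3$ is handled not by a partial-fraction split of the summand, but by \emph{lifting} to a higher series with argument $q$. Since $\lim_{x\to\infty}(xq;q)_k/(x;q)_k=q^k$, one has
\[
{}_3\phi_2\!\left[\begin{array}{c}a,\,b,\,q^{-m}\\ q,\,abq^{2-m}\end{array};q,\,q^3\right]
=\lim_{\substack{x,y\to\infty\\ c\to q^{-1}}}
{}_5\phi_4\!\left[\begin{array}{c}a,\,b,\,xq,\,yq,\,q^{-m}\\ cq^2,\,x,\,y,\,abq^{1-m}/c\end{array};q,\,q\right].
\]
The paper then observes, by comparing summands, a contiguous relation that writes this ${}_5\phi_4$ as a linear combination of two ${}_4\phi_3$'s (differing by a shift $c\mapsto cq$); each ${}_4\phi_3$ is in turn, by the same device, a combination of two balanced ${}_3\phi_2$'s to which the $q$-Saalsch\"utz identity applies directly. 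Taking the limits $x,y\to\infty$ and $c\to q^{-1}$ at the end produces the stated closed form, and the two-term brace is exactly the residue of this two-step contiguous decomposition. Without this lift-to-argument-$q$ trick, your plan of ``identify the defect from balance and correct'' does not have a concrete landing point.

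Your fallback via creative telescoping is a genuinely different route and would give a rigorous proof: both sides are explicit rational functions of $q^m$ times $(1/a,1/b;q)_m/(q,1/ab;q)_m$, so exhibiting a common first-order $q$-recurrence in $m$ together with the checks at $m=0,1$ suffices. That approach is perfectly valid but less structural than the paper's, and if you present it you must actually display the recurrence and certificate rather than defer them to ``a direct computation.''
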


\begin{proof}
By comparing the $k$-th summands in the summations, it is easy to
see that
\begin{align*}
&{_{4}\phi_{3}}\!\left[\begin{array}{c}
a,\, b,\, xq,\, q^{-m} \\
cq,\, x,\, abq^{1-m}/c
\end{array};q,\,q  \right]\\[5pt]
&\quad=\frac{(1-c)(ab-cxq^m)}{(1-x)(ab-c^2q^m)}
{_{3}\phi_{2}}\!\left[\begin{array}{c}
a,\, b,\, q^{-m} \\
c,\, abq^{1-m}/c
\end{array};q,\,q  \right]\\[5pt]
&\quad\quad+\frac{(c-x)(ab-cq^m)}{(1-x)(ab-c^2q^m)}
{_{3}\phi_{2}}\!\left[\begin{array}{c}
a,\, b,\, q^{-m} \\
cq,\, abq^{-m}/c
\end{array};q,\,q  \right].
\end{align*}
Evaluating the two series on the right-hand side by
 $q$-Saalsch\"{u}tz identity (cf. \cite[Appendix
(II.12)]{Gasper}):
\begin{align*}
_{3}\phi_{2}\!\left[\begin{array}{c}
a,\, b,\, q^{-m} \\
c,\, abq^{1-m}/c
\end{array};q,\,q  \right]
=\frac{(c/a,c/b;q)_m}{(c,c/ab;q)_m},
\end{align*}
we get
\begin{align}
_{4}\phi_{3}\!\left[\begin{array}{c}
a,\, b,\, xq,\, q^{-m} \\
cq,\, x,\, abq^{1-m}/c
\end{array};q,\,q  \right]=\Omega_m(q;a,b,c,x), \label{saal-b}
\end{align}
where
\begin{align*}
\Omega_m(q;a,b,c,x)&=\frac{(c/a,c/b;q)_m}{(qc,c/ab;q)_m}\\
&\quad\times\left\{\frac{(1-cq^m)(ab-cxq^{m})}{(1-x)(ab-c^2q^{m})}
+\frac{(c-x)(ab-c)(a-cq^{m})(b-cq^{m})}{(1-x)(a-c)(b-c)(ab-c^2q^{m})}\right\}.
\end{align*}
Similarly, it is also routine to confirm the relation
\begin{align*}
&{_{5}\phi_{4}}\!\left[\begin{array}{c}
a,\, b,\, xq,\, yq,\,q^{-m}\\
cq^2,\, x,\, y,\, abq^{1-m}/c
\end{array};q,\,q  \right]\\[5pt]
&\quad=\frac{(1-cq)(ab-cyq^m)}{(1-y)(ab-c^2q^{m+1})}
{_{4}\phi_{3}}\!\left[\begin{array}{c}
a,\, b,\, xq,\, q^{-m} \\
cq,\, x,\, abq^{1-m}/c
\end{array};q,\,q  \right]\\[5pt]
&\quad\quad+\frac{(cq-y)(ab-cq^m)}{(1-y)(ab-c^2q^{m+1})}
{_{4}\phi_{3}}\!\left[\begin{array}{c}
a,\, b,\, xq,\, q^{-m} \\
cq^2,\, x,\, abq^{-m}/c
\end{array};q,\,q  \right].
\end{align*}
Calculating the two series on the right-hand side via
\eqref{saal-b}, we arrive at
\begin{align*}
&_{5}\phi_{4}\!\left[\begin{array}{c}
a,\, b,\, xq,\, yq,\,q^{-m}\\
cq^2,\, x,\, y,\, abq^{1-m}/c
\end{array};q,\,q  \right]  \notag\\[5pt]
&\quad=\frac{(1-cq)(ab-cyq^{m})}{(1-y)(ab-c^2q^{m+1})}\Omega_m(q;a,b,c,x)
 \notag\\[5pt]
&\qquad+\frac{(cq-y)(ab-cq^{m})}{(1-y)(ab-c^2q^{m+1})}\Omega_m(q;a,b,cq,x).
\end{align*}
Letting $c\to q^{-1}, \,x\to\infty,\, y\to\infty$ in the last
equation, we are led to Lemma \ref{lem-a}.
\end{proof}

Subsequently, we shall deduce the following united parametric
extension of Theorems \ref{thm-a} and \ref{thm-b}.

\begin{thm}\label{thm-c}
Let $n$ be a positive integer with $n\equiv 3-t\pmod 3$ and
$t\in\{1,2\}$. Then, modulo $(1-aq^{tn})(a-q^{tn})(b-q^{tn})$,
\begin{align}
&\sum_{k=0}^{(tn+1)/3}\frac{(aq^{-1},q^{-1}/a,q^{-1}/b;q^3)_k}{(q^3;q^3)_k^2(q^3/b;q^3)_k}q^{9k}
\notag\\[5pt]
&\quad\equiv\,\frac{(b-q^{tn})(ab-1-a^2+aq^{tn})}{(a-b)(1-ab)}\frac{(bq,q;q^3)_{(tn+1)/3}}{(bq)^{(tn+1)/3}(1/b,q^3;q^3)_{(tn+1)/3}}A_n(q;b,t)
\notag\\[5pt]
&\qquad+\:
\frac{(1-aq^{tn})(a-q^{tn})}{(a-b)(1-ab)}\frac{(aq,q/a;q^3)_{(tn+1)/3}}{b^{(tn+1)/3}(1/b,1/bq;q^3)_{(tn+1)/3}}B(q;a,b),\label{eq:wei-aa}
\end{align}
where
\begin{align*}
A_n(q;b,t)&=\frac{b(1-q^{tn+1})\{q^{tn+2}/b-q+q^{tn-1}(1+q^3-q^{tn+2}-q^2/b)\}}{(1-q)(1-bq^{tn-1})(1-q^{tn+1}/b)}\\
&\quad-\frac{1-q^{tn-2}/b-q^{tn+1}(2-q^{tn-1}-q^{-1}/b)}{(1-q^{tn-1})(1-q^{-1}/b)},\\
B(q;a,b)&=\frac{(1-bq)\{1-q-b(q^{-2}+q-a-1/a)\}}{q(1-q)(1-ab/q)(1-b/aq)}\\
&\quad-\frac{1-q^{-2}-b(2q-a-1/a)}{bq(1-aq^{-1})(1-q^{-1}/a)}.
\end{align*}
\end{thm}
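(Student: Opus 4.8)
The plan is to establish \eqref{eq:wei-aa} by the creative microscoping strategy. Both sides of \eqref{eq:wei-aa} are rational functions of $a,b,q$ whose denominators are coprime to each of the three factors $1-aq^{tn}$, $a-q^{tn}$, $b-q^{tn}$. These three polynomials are pairwise non-associate irreducibles in $\mathbb{Q}(q)[a,b]$ (they coincide up to a unit only if $q^{2tn}=1$, impossible for transcendental $q$), hence pairwise coprime, so by the Chinese remainder theorem for coprime polynomials it suffices to verify \eqref{eq:wei-aa} modulo each of the three factors separately, i.e.\ after each of the substitutions $b=q^{tn}$, $a=q^{tn}$, $a=q^{-tn}$. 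Throughout write $m=(tn+1)/3$; the hypothesis $n\equiv 3-t\pmod 3$ forces $tn\equiv 2\pmod 3$, so $m\in\mathbb{Z}^{+}$ and $tn=3m-1$, which is exactly why the left-hand sum is truncated at $k=(tn+1)/3=m$.

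First I would treat the substitution $b=q^{tn}$. Then $(q^{-1}/b;q^3)_k$ becomes $(q^{-3m};q^3)_k$, which vanishes precisely for $k>m$, so the truncated left-hand side becomes the genuinely terminating series
\[
\sum_{k=0}^{m}\frac{(aq^{-1},q^{-1}/a,q^{-3m};q^3)_k}{(q^3;q^3)_k^2\,(q^{3-tn};q^3)_k}q^{9k},
\]
which is the series of Lemma \ref{lem-a} after the replacements $q\mapsto q^3$, $a\mapsto aq^{-1}$, $b\mapsto q^{-1}/a$, with the same $m$ (one checks $aq^{-1}\cdot q^{-1}/a\cdot q^{6-3m}=q^{3-tn}$). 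On the right of \eqref{eq:wei-aa} the substitution $b=q^{tn}$ annihilates the first summand, and the prefactor $(1-aq^{tn})(a-q^{tn})/[(a-b)(1-ab)]$ of the second summand collapses to $1$. Comparing the closed form from Lemma \ref{lem-a} with the surviving term $\frac{(aq,q/a;q^3)_m}{q^{tnm}(q^{-tn},q^{-tn-1};q^3)_m}B(q;a,q^{tn})$ needs only the reflection formulas $(q^{-tn};q^3)_m=(-1)^m q^{-2m-3m(m-1)/2}(q^2;q^3)_m$ and $(q^{-tn-1};q^3)_m=(-1)^m q^{-3m(m+1)/2}(q^3;q^3)_m$: these show the two $q$-shifted-factorial prefactors differ exactly by the factor $q^{3m}=q^{tn+1}=bq$, while dividing the bracketed factor of Lemma \ref{lem-a} by $bq$ (under $a\mapsto aq^{-1}$, $b\mapsto q^{-1}/a$, $q^{3m}\mapsto bq$) reproduces the two fractions defining $B(q;a,b)$ one by one.

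Next I would treat the substitutions $a=q^{\pm tn}$. Since the summand on the left of \eqref{eq:wei-aa} and the entire right-hand side are invariant under $a\mapsto 1/a$, it is enough to do $a=q^{tn}$; the case $a=q^{-tn}$ then follows. With $a=q^{tn}$ the factor $(q^{-1}/a;q^3)_k=(q^{-3m};q^3)_k$ again terminates the sum at $k=m$, and the resulting series matches Lemma \ref{lem-a} after $q\mapsto q^3$, $a\mapsto q^{tn-1}$, $b\mapsto q^{-1}/b$, with the same $m$ (here $q^{tn-1}\cdot q^{-1}/b\cdot q^{6-3m}=q^3/b$). On the right, $a=q^{tn}$ kills the second summand and leaves the coefficient of the first equal to $1$, so one must identify Lemma \ref{lem-a} with $\frac{(bq,q;q^3)_m}{(bq)^m(1/b,q^3;q^3)_m}A_n(q;b,t)$. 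This time the reflection formulas $(q^{1-tn};q^3)_m=(-1)^m q^{-m-3m(m-1)/2}(q;q^3)_m$ and $(bq^{3-3m};q^3)_m=(-b)^m q^{-3m(m-1)/2}(1/b;q^3)_m$ make the two prefactors equal outright, and the bracketed factor of Lemma \ref{lem-a} under $a\mapsto q^{tn-1}$, $b\mapsto q^{-1}/b$, $q^{3m}\mapsto q^{tn+1}$ is, term by term, the definition of $A_n(q;b,t)$.

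Having verified \eqref{eq:wei-aa} modulo each of $1-aq^{tn}$, $a-q^{tn}$, $b-q^{tn}$, the Chinese remainder theorem for coprime polynomials yields it modulo their product, which completes the proof; Theorems \ref{thm-a} and \ref{thm-b} will then follow from \eqref{eq:wei-aa} on taking $a,b\to1$ (the cases $t=2$ and $t=1$). The substantive part of the argument is purely bookkeeping: one must confirm that each of the three substitutions converts the truncated sum into a \emph{true} terminating ${}_3\phi_2$ with no residual tail (so that Lemma \ref{lem-a} applies with exactly the parameters listed and nothing extra), carry out the reflection-formula computations that reconcile the $q$-shifted-factorial prefactor coming out of Lemma \ref{lem-a} with the prefactor written in \eqref{eq:wei-aa}, and check the two claimed collapses of the rational prefactors on the right together with the term-by-term matching with $A_n(q;b,t)$ and $B(q;a,b)$. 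The pairwise coprimality of the three factors, which licenses the use of the Chinese remainder theorem, should also be recorded explicitly.
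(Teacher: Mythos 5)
Your proposal is correct and follows essentially the same route as the paper: specialize $a=q^{\pm tn}$ and $b=q^{tn}$ so that the truncated sum becomes a terminating ${}_3\phi_2$ evaluable by Lemma \ref{lem-a}, and then recombine the resulting congruences modulo $(1-aq^{tn})(a-q^{tn})$ and modulo $(b-q^{tn})$ via the Chinese remainder theorem for coprime polynomials. The only cosmetic difference is that you check that the stated right-hand side reduces to the correct value under each substitution, whereas the paper assembles the right-hand side from the two explicit CRT multipliers $\frac{(b-q^{tn})(ab-1-a^2+aq^{tn})}{(a-b)(1-ab)}$ and $\frac{(1-aq^{tn})(a-q^{tn})}{(a-b)(1-ab)}$; these are the same argument.
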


\begin{proof}
When $a=q^{-tn}$ or $a=q^{tn}$, the left-hand side of
\eqref{eq:wei-aa} is equal to
\begin{align}
\sum_{k=0}^{(tn+1)/3}\frac{(q^{-1-tn},q^{-1+tn},q^{-1}/b;q^3)_k}{(q^3;q^3)_k^2(q^{3}/b;q^3)_k}q^{9k}
= {_{3}\phi_{2}}\!\left[\begin{array}{cccccccc}
 q^{-1-tn},  q^{-1+tn}, q^{-1}/b\\
 q^3,q^{3}/b
\end{array};q^3,\, q^9 \right].
 \label{eq:saal-aa}
\end{align}
According to Lemma \ref{lem-a}, the right-hand side of
\eqref{eq:saal-aa} can be written as
\begin{align*}
\frac{(bq,q;q^3)_{(tn+1)/3}}{(bq)^{(tn+1)/3}(1/b,q^3;q^3)_{(tn+1)/3}}A_n(q;b,t).
\end{align*}
Since $(1-aq^{tn})$ and $(a-q^{tn})$ are relatively prime
polynomials, we have the following result: modulo
$(1-aq^{tn})(a-q^{tn})$,
\begin{align}
\sum_{k=0}^{(tn+1)/3}\frac{(aq^{-1},q^{-1}/a,q^{-1}/b;q^3)_k}{(q^3;q^3)_k^2(q^3/b;q^3)_k}q^{9k}
\equiv\frac{(bq,q;q^3)_{(tn+1)/3}}{(bq)^{(tn+1)/3}(1/b,q^3;q^3)_{(tn+1)/3}}A_n(q;b,t).
\label{eq:wei-bb}
\end{align}

When $b=q^{tn}$, the left-hand side of  \eqref{eq:wei-aa} is equal
to
\begin{align}
\sum_{k=0}^{(tn+1)/3}\frac{(aq^{-1},q^{-1}/a,q^{-1-tn};q^3)_k}{(q^3;q^3)_k^2(q^{3-tn};q^3)_k}q^{9k}
= {_{3}\phi_{2}}\!\left[\begin{array}{cccccccc}
 aq^{-1},  q^{-1}/a, q^{-1-tn}\\
 q^3,q^{3-tn}
\end{array};q^3,\, q^9 \right].
 \label{eq:saal-bb}
\end{align}
By Lemma \ref{lem-a},  the right-hand side of \eqref{eq:saal-bb}
can be expressed as
\begin{align*}
&\frac{(aq,q/a;q^3)_{(tn+1)/3}}{(q^2,q^3;q^3)_{(tn+1)/3}}\\
&\times\bigg\{\frac{q^{tn}(1-q^{tn+1})\{1-q-q^{tn}(q^{-2}+q-a-1/a)\}}{(1-q)(1-aq^{tn-1})(1-q^{tn-1}/a)}
-\frac{1-q^{-2}-q^{tn}(2q-a-1/a)}{(1-aq^{-1})(1-q^{-1}/a)}\bigg\}.
\end{align*}
Then we obtain the conclusion: modulo $(b-q^{tn})$,
\begin{align}
\sum_{k=0}^{(tn+1)/3}\frac{(aq^{-1},q^{-1}/a,q^{-1}/b;q^3)_k}{(q^3;q^3)_k^2(q^3/b;q^3)_k}q^{9k}
\equiv\frac{(aq,q/a;q^3)_{(tn+1)/3}}{b^{(tn+1)/3}(1/b,1/bq;q^3)_{(tn+1)/3}}B(q;a,b).
\label{eq:wei-cc}
\end{align}

It is clear that the polynomials $(1-aq^{tn})(a-q^{tn})$ and
$(b-q^{tn})$ are relatively prime. Noting the $q$-congruences
\begin{align*}
&\frac{(b-q^{tn})(ab-1-a^2+aq^{tn})}{(a-b)(1-ab)}\equiv1\pmod{(1-aq^{tn})(a-q^{tn})},
\\[5pt]
&\qquad\qquad\frac{(1-aq^{tn})(a-q^{tn})}{(a-b)(1-ab)}\equiv1\pmod{(b-q^{tn})}
\end{align*}
and employing the Chinese remainder theorem for coprime polynomials,
we get Theorem \ref{thm-c} from \eqref{eq:wei-bb} and
\eqref{eq:wei-cc}.
\end{proof}

\begin{proof}[Proof of Theorem \ref{thm-a}]
Letting $b\to1, t=2$ in Theorem \ref{thm-c}, we arrive at the
formula: modulo $\Phi_n(q)(1-aq^{2n})(a-q^{2n})$,
\begin{align}
&\sum_{k=0}^{(2n+1)/3}\frac{(aq^{-1},q^{-1}/a,q^{-1};q^3)_k}{(q^3;q^3)_k^3}q^{9k}
\notag\\[5pt]
&\quad\equiv\,\frac{(1-a)^2+(1-aq^{2n})(a-q^{2n})}{(1-a)^2}
\frac{(q;q^3)_{(2n+1)/3}^2}{q^{(2n+1)/3}(q^3;q^3)_{(2n+1)/3}^2}C_n(q)
\notag\\[5pt]
&\qquad+\:\frac{(1-aq^{2n})(a-q^{2n})}{(1-a)^2}
\frac{(aq,q/a;q^3)_{(2n+1)/3}}{(q^2,q^3;q^3)_{(2n-2)/3}}D(q;a)
\notag\\
&\quad\equiv\frac{(q;q^3)_{(2n+1)/3}^2}{q^{(2n+1)/3}(q^3;q^3)_{(2n+1)/3}^2}C_n(q)+\frac{(1-aq^{2n})(a-q^{2n})}{q^{(2n+1)/3}(1-a)^2}
\notag\\[5pt]
&\qquad\times\bigg\{-\frac{(q;q^3)_{(2n+1)/3}^2}{(q^3;q^3)_{(2n+1)/3}^2}(3q+3q^2)+
\frac{(aq,q/a;q^3)_{(2n+1)/3}}{(q^3;q^3)_{(2n+1)/3}^2}(1-q)^2D(q;a)\bigg\},
\label{eq:wei-dd}
\end{align}
where
\begin{align*}
C_n(q)&=\frac{q^3+q^{2n}(1+q^{4n})(1-3q+q^3-3q^4)}{q(1-q)^2(1-q^{2n-1})^2}\\
&\quad+\frac{q^{4n}(1-3q+6q^2+2q^3-3q^4+3q^5)+q^{8n+3}}{q(1-q)^2(1-q^{2n-1})^2},\\
D(q;a)&=\frac{(1+a+a^2)(a-3aq+q^3+a^2q^3-3aq^4)+3a^2q^2(2+q^3)}{q(1-q)^2(1-aq)^2(1-a/q)^2}.
\end{align*}

 By the L'H\^{o}spital rule, we have
\begin{align*}
&\lim_{a\to1}\frac{(1-aq^{2n})(a-q^{2n})}{(1-a)^2}\Big\{-(q;q^3)_{(2n+1)/3}^2(3q+3q^2)+
(aq,q/a;q^3)_{(2n+1)/3}(1-q)^2D(q;a)\Big\}\\[5pt]
&\quad=-q(1+q)[2n]^2(q;q^3)_{(2n+1)/3}^2\bigg\{\sum_{i=1}^{(2n+1)/3}\frac{3q^{3i-2}}{[3i-2]^2}-\frac{1+5q+3q^2}{1+q}\bigg\}.
\end{align*}

Letting $a\to1$ in \eqref{eq:wei-dd} and utilizing the above limit,
we are led to the $q$-supercongruence: modulo $\Phi_n(q)^3$,
\begin{align*}
&\sum_{k=0}^{(2n+1)/3}\frac{(q^{-1};q^3)_k^3}{(q^3;q^3)_k^3}q^{9k}
\notag\\[5pt]
&\quad\equiv\,
\frac{(q;q^3)_{(2n+1)/3}^2}{q^{(2n+1)/3}(q^3;q^3)_{(2n+1)/3}^2}C_n(q)
\notag\\[5pt]
&\qquad-q(1+q)[2n]^2\frac{(q;q^3)_{(2n+1)/3}^2}{q^{(2n+1)/3}(q^3;q^3)_{(2n+1)/3}^2}\bigg\{\sum_{i=1}^{(2n+1)/3}\frac{3q^{3i-2}}{[3i-2]^2}-\frac{1+5q+3q^2}{1+q}\bigg\}
\notag\\
&\quad\equiv
q^{(2-2n)/3}(1+q)\frac{(q;q^3)_{(2n+1)/3}^2}{(q^3;q^3)_{(2n+1)/3}^2}
\notag\\[5pt]
&\qquad\times\bigg\{3-[2n]^2\bigg(\sum_{i=1}^{(2n+1)/3}\frac{3q^{3i-2}}{[3i-2]^2}-\frac{1+5q+3q^2}{1+q}\bigg)\bigg\}.
\end{align*}
This completes the proof of Theorem \ref{thm-a}.
\end{proof}

\begin{proof}[Proof of Theorem \ref{thm-b}]
Letting $b\to1, t=1$ in Theorem \ref{thm-c}, we obtain the result:
modulo $\Phi_n(q)(1-aq^{n})(a-q^{n})$,
\begin{align}
&\sum_{k=0}^{(n+1)/3}\frac{(aq^{-1},q^{-1}/a,q^{-1};q^3)_k}{(q^3;q^3)_k^3}q^{9k}
\notag\\[5pt]
&\quad\equiv\,\frac{(1-a)^2+(1-aq^{n})(a-q^{n})}{(1-a)^2}
\frac{(q;q^3)_{(n+1)/3}^2}{q^{(n+1)/3}(q^3;q^3)_{(n+1)/3}^2}C_{n/2}(q)
\notag\\[5pt]
&\qquad+\:\frac{(1-aq^{n})(a-q^{n})}{(1-a)^2}
\frac{(aq,q/a;q^3)_{(n+1)/3}}{(q^2,q^3;q^3)_{(n-2)/3}}D(q;a)
\notag\\
&\quad\equiv\frac{(q;q^3)_{(n+1)/3}^2}{q^{(n+1)/3}(q^3;q^3)_{(n+1)/3}^2}C_{n/2}(q)+\frac{(1-aq^{n})(a-q^{n})}{q^{(n+1)/3}(1-a)^2}
\notag\\[5pt]
&\qquad\times\bigg\{\frac{(q;q^3)_{(n+1)/3}^2}{(q^3;q^3)_{(n+1)/3}^2}(3q+3q^2)-
\frac{(aq,q/a;q^3)_{(n+1)/3}}{(q^3;q^3)_{(n+1)/3}^2}(1-q)^2D(q;a)\bigg\}.
\label{eq:wei-ee}
\end{align}

 By the L'H\^{o}spital rule, we have
\begin{align*}
&\lim_{a\to1}\frac{(1-aq^{n})(a-q^{n})}{(1-a)^2}\Big\{(q;q^3)_{(n+1)/3}^2(3q+3q^2)-
(aq,q/a;q^3)_{(n+1)/3}(1-q)^2D(q;a)\Big\}\\[5pt]
&\quad=q(1+q)[n]^2(q;q^3)_{(n+1)/3}^2\bigg\{\sum_{i=1}^{(n+1)/3}\frac{3q^{3i-2}}{[3i-2]^2}-\frac{1+5q+3q^2}{1+q}\bigg\}.
\end{align*}

Letting $a\to1$ in \eqref{eq:wei-ee} and employing the upper limit,
we get the $q$-supercongruence: modulo $\Phi_n(q)^3$,
\begin{align*}
&\sum_{k=0}^{(n+1)/3}\frac{(q^{-1};q^3)_k^3}{(q^3;q^3)_k^3}q^{9k}
\notag\\[5pt]
&\quad\equiv\,
\frac{(q;q^3)_{(n+1)/3}^2}{q^{(n+1)/3}(q^3;q^3)_{(n+1)/3}^2}C_{n/2}(q)
\notag\\[5pt]
&\qquad+q(1+q)[n]^2\frac{(q;q^3)_{(n+1)/3}^2}{q^{(n+1)/3}(q^3;q^3)_{(n+1)/3}^2}\bigg\{\sum_{i=1}^{(n+1)/3}\frac{3q^{3i-2}}{[3i-2]^2}-\frac{1+5q+3q^2}{1+q}\bigg\}
\notag\\
&\quad\equiv
q^{(2-n)/3}(1+q)\frac{(q;q^3)_{(n+1)/3}^2}{(q^3;q^3)_{(n+1)/3}^2}\\[5pt]
&\qquad\times\bigg\{\theta_n(q)+[n]^2\bigg(\sum_{i=1}^{(n+1)/3}\frac{3q^{3i-2}}{[3i-2]^2}-\frac{1+5q+3q^2}{1+q}\bigg)\bigg\}.
\end{align*}
Thus we finish the proof of Theorem \ref{thm-b}.
\end{proof}
%%%%%%%%%%%%%%%%%%%%%%%%%%%%%%%%%%%%%%%%%%%%%%%%%%%%%%%%%%%%%%%%%%%%%%%%%%%%%%%%%%%%%%%%%%%%%%%%%%%%%%%%%%%%%%%%%%%%%%%%%%%%%%%%%%%%%%%%%%%%%%%%%%%%%%%%%%%%%%%%%%%%%%%%%%%%%%%%%%%%%%%%%%%%%%%%%%%%%%%%%%%%%%%
\section{Proof of Propositions \ref{prop-a} and \ref{prop-b}}

 Let $\Gamma_p^{'}(x)$ and $\Gamma_p^{''}(x)$
 be the first derivative and second derivative of $\Gamma_p(x)$
 respectively. Now we begin to provide the Proof of Propositions \ref{prop-a} and
 \ref{prop-b}.

\begin{proof}[Proof of Proposition \ref{prop-a}]
By means of the properties of the $p$-adic Gamma function, we arrive
at
\begin{align*}
\frac{(1/3)_{(2p+1)/3}^2}{(1)_{(2p+1)/3}^2}&=\frac{p^2}{(2p+1)^2}\bigg\{\frac{\Gamma_p((2+2p)/3)\Gamma_p(1)}{\Gamma_p(1/3)\Gamma_p((1+2p)/3)}\bigg\}^2
\notag\\[5pt]
&=\frac{p^2}{(2p+1)^2}\big\{\Gamma_p(2/3)\Gamma_p((2+2p)/3)\Gamma_p((2-2p)/3)\big\}^2.
\end{align*}
Moreover, it is not difficult to understand that
\begin{align*}
&1+6p^2-\sum_{i=1}^{(2p+1)/3}\frac{4p^2}{(3i-2)^2}
\notag\\[5pt]
&=-3+6p^2-\sum_{i=1}^{(p-1)/3}\frac{4p^2}{(3i-2)^2}-\sum_{i=(p+5)/3}^{(2p+1)/3}\frac{4p^2}{(3i-2)^2}.
\end{align*}
Then we can proceed as follows:

\begin{align*}
&\frac{(1/3)_{(2p+1)/3}^2}{(1)_{(2p+1)/3}^2}\bigg\{1+6p^2-\sum_{i=1}^{(2p+1)/3}\frac{4p^2}{(3i-2)^2}\bigg\}
\\[5pt]
&\quad=\frac{p^2}{(2p+1)^2}\big\{\Gamma_p(2/3)\Gamma_p((2+2p)/3)\Gamma_p((2-2p)/3)\big\}^2\\[5pt]
&\qquad\times\bigg\{-3+6p^2-\sum_{i=1}^{(p-1)/3}\frac{4p^2}{(3i-2)^2}-\sum_{i=(p+5)/3}^{(2p+1)/3}\frac{4p^2}{(3i-2)^2}\bigg\}
\\[5pt]
&\quad\equiv\frac{-3p^2}{(2p+1)^2}\Gamma_p(2/3)^6
\\[5pt]
&\quad\equiv-3p^2\Gamma_p(2/3)^6\pmod{p^3}.
\end{align*}
This verifies the correctness of Proposition \ref{prop-a}.
\end{proof}

\begin{proof}[Proof of Proposition \ref{prop-b}]
Through the properties of the $p$-adic Gamma function, we have
\begin{align}
\frac{(1/3)_{(p+1)/3}^2}{(1)_{(p-2)/3}^2}&=\bigg\{\frac{\Gamma_p((2+p)/3)\Gamma_p(1)}{\Gamma_p(1/3)\Gamma_p((1+p)/3)}\bigg\}^2
\notag\\[5pt]
&=\big\{\Gamma_p(2/3)\Gamma_p((2+p)/3)\Gamma_p((2-p)/3)\big\}^2
\notag\\[5pt]
&\equiv\Gamma_p(2/3)^2\bigg\{\Gamma_p(2/3)+\Gamma_p^{'}(2/3)\frac{p}{3}+\Gamma_p^{''}(2/3)\frac{p^2}{18}\bigg\}^2
\notag\\[5pt]
&\quad\times\bigg\{\Gamma_p(2/3)-\Gamma_p^{'}(2/3)\frac{p}{3}+\Gamma_p^{''}(2/3)\frac{p^2}{18}\bigg\}^2
\notag\\[5pt]
&\equiv\Gamma_p(2/3)^6\bigg\{1-\frac{2p^2}{9}G_1(2/3)^2+\frac{2p^2}{9}G_2(2/3)\bigg\}\pmod{p^3},
\label{eq:wei-aaa}
\end{align}
where $G_1(x)=\Gamma_p^{'}(x)/\Gamma_p(x)$ and
$G_2(x)=\Gamma_p^{''}(x)/\Gamma_p(x)$.

Let
\[H_{m}
  =\sum_{k=1}^m\frac{1}{k},\quad H_{m}^{(2)}
  =\sum_{k=1}^m\frac{1}{k^{2}}.\]
 In light of the three relations from Wang and Pan \cite[Lemmas 2.3 and 2.4]{Wang}:
\begin{align*}
&G_2(0)=G_1(0)^2
\\[5pt]
 &G_1(2/3)\equiv G_1(0)+{H}_{(2p-1)/3}\pmod{p},
 \\[5pt]
&G_2(2/3)\equiv
G_2(0)+2G_1(0){H}_{(2p-1)/3}+H_{(2p-1)/3}^2-H_{(2p-1)/3}^{(2)}\pmod{p},
\end{align*}
we get
\begin{align}
G_2(2/3)-G_1(2/3)^2\equiv-{H}_{(2p-1)/3}^{(2)}\pmod{p}.
\label{eq:wei-bbb}
\end{align}
In view of \eqref{eq:wei-aaa} and \eqref{eq:wei-bbb}, we are led to
\begin{align}
&\frac{(1/3)_{(p+1)/3}^2}{(1)_{(p-2)/3}^2}\bigg\{1+\frac{p^2}{(p+1)^2}\sum_{i=1}^{(p+1)/3}\frac{1}{(3i-2)^2}\bigg\}\\[5pt]
 &\quad\equiv\Gamma_p(2/3)^6\bigg\{1-\frac{2p^2}{9}{H}_{(2p-1)/3}^{(2)}\bigg\}\bigg\{1+\frac{p^2}{(p+1)^2}\sum_{i=1}^{(p+1)/3}\frac{1}{(3i-2)^2}\bigg\}
\notag\\[5pt]
&\quad\equiv
\Gamma_p(2/3)^6\bigg\{1-\frac{2p^2}{9}{H}_{(2p-1)/3}^{(2)}+\frac{p^2}{(p+1)^2}\sum_{i=1}^{(p+1)/3}\frac{1}{(3i-2)^2}\bigg\}\pmod{p^3}.
 \label{eq:wei-ccc}
\end{align}
It is easy to see that
 \begin{align}
\sum_{i=1}^{(p+1)/3}\frac{1}{(3i-2)^2}&=H_{p-1}^{(2)}-\frac{1}{9}H_{(p-2)/3}^{(2)}-\sum_{i=1}^{(p-2)/3}\frac{1}{(3i-1)^2}
 \notag\\[5pt]
 &\equiv
-\frac{1}{9}H_{(p-2)/3}^{(2)}-\sum_{i=1}^{(p-2)/3}\frac{1}{(3i-1)^2}
\notag\\[5pt]
&=-\frac{1}{9}H_{(p-2)/3}^{(2)}-\sum_{i=1}^{(p-2)/3}\frac{1}{(p-3i)^2}
 \notag\\[5pt]
 &\equiv
-\frac{2}{9}H_{(p-2)/3}^{(2)}\notag\\[5pt]
&=-\frac{2}{9}\sum_{i=(2p+2)/3}^{p-1}\frac{1}{(p-i)^2}
\notag\\[5pt]
 &\equiv
-\frac{2}{9}\sum_{i=(2p+2)/3}^{p-1}\frac{1}{i^2}\notag\\[5pt]
&\equiv\frac{2}{9}H_{(2p-1)/3}^{(2)}
 \pmod{p}. \label{eq:wei-ddd}
\end{align}
 Substituting \eqref{eq:wei-ddd} into  \eqref{eq:wei-ccc}, we confirm the validity of Proposition \ref{prop-b}.
\end{proof}

{\bf{Acknowledgments}}\\

The work is supported by the National Natural Science Foundation of
China (No. 12071103) and the Natural Science Foundation of Hainan
Province (No. 2019RC184).

%%%%%%%%%%%%%%%%%%%%%%%%%%%%%%%%%%%%%%%%%%%%%%%%%%%%%%%%%%%%%%%%%%%%%%%%%%%%%%%%%%%%%%%%%%%%%%%%%%%%%%%%%%%%%%%%%%%%%%%%%%%%%%%%%%%%%%%%%%%%%%%%%%%%%%%%%%%%%%%%%%%%%%%%%%%%%%%%%%%%%%%%%%%%%%%%%%%%%%%%%%

\end{document}